\newif\ifpdf
\newcommand{\fhat}[1]{\widehat{#1}}
\newcommand{\Z}{\mathbb{Z}}
\newcommand{\R}{\mathbb{R}}
\newcommand{\N}{\mathbb{N}}
\newcommand{\C}{\mathbb{C}}
\newcommand{\calS}{{\cal S}}
\newcommand{\calF}{{\cal F}}
\newcommand{\calM}{{\cal M}}
\newcommand{\gauss}{\mathfrak{g}}
\newcommand{\LtR}{L^2(\mathbb{R}^d)}
\newtheorem{theorem}{Theorem}
\newtheorem{lemma}[theorem]{Lemma}
\newtheorem{proposition}[theorem]{Proposition}
\newtheorem{definition}[theorem]{Definition}
\newtheorem{corollary}[theorem]{Corollary}
\ifpdf \DeclareGraphicsExtensions{.pdf, .jpg} \else
\title{Remarks on multivariate Gaussian Gabor frames}
\author{G\"otz E. Pfander, Peter Rashkov}
\date{August 20, 2010}
\begin{document}

\maketitle

\begin{abstract}
We report on initial findings on Gabor systems with multivariate Gaussian
window. Unlike the existing characterisation for dimension one in terms of
lattice density, our results indicate that the behavior of Gaussians  in
higher-dimensional Gabor systems is intricate and further exploration is a
valuable and challenging task.
\end{abstract}
\noindent {\it Keywords.} Gabor frames and Riesz bases, sampling in
Bargmann-Fock spaces, Beurling density

\section{Introduction}
Gabor's seminal paper~\cite{Gab46} claimed that every function in
$L^2(\R)$ can be well represented as a series of translated and modulated
copies of the  Gaussian $\gauss_1(x)=e^{-\pi|x|^2}$. In detail, he suggested
that for every $f\in L^2(\R)$ there exists a  sequence of scalars $c_{kl}(f)$
such that \begin{equation}\label{eq:gse} f(t)=\sum_{k,l\in\Z}c_{kl}(f)e^{2\pi i lt}
\gauss_1(t-k).
\end{equation}
But while its central role in analysis and its wide spectrum of nice analytic
properties, for example, optimal time--frequency concentration, make the
Gaussian a natural candidate to be a window function for so-called Gabor
systems, it is now well established that any series representation of the form
\eqref{eq:gse} only converges to $f$ in a distributional sense, and not in the
$L^2$-norm \cite{J81}. Today, the spanning properties of the Gabor system
$(\gauss_1,a\Z\times b\Z)=\{e^{2\pi iblt}\gauss_1(t-ak)\}$, $a,b>0$, are fully
understood, for example, the system suggested by Gabor turns out to be
 overcomplete (\cite{Dau90,Lyu92,SW92} and Theorem~\ref{th:lyub} below).

Multivariate Gaussian and general Gabor systems though are far from being
 understood. While, for example, it is known that for any $g$, the Gabor system
$(g,\Lambda)=\{e^{2\pi i\omega t}g(t-x):(x,\omega)\in\Lambda\}$ is not a
frame for $\LtR$ if the set $\Lambda$ has density less than 1, nontrivial
positive results for  given window functions such as the Gaussian are scarce
in the literature~\cite{H07}. Simultaneously to our work, Gr\"ochenig has
started to study of multivariate Gaussian Gabor systems. His focus though is
on so-called complex lattices~\cite{GroGa}. Here, we provide some results
that illustrate the intricate structure of Gaussian Gabor frames in higher
dimensions for real lattices.

\section{Gabor frames}
We denote by $\gauss_d$ the $d$-dimensional normalized Gaussian $2^{\frac d4}e^{-\pi\|x\|^2}$. It is clear that
$\gauss_d=\displaystyle\otimes_{d\,\text{times}}\,\gauss_1$. 
A
\emph{translation} or \emph{time shift} is the operator $(T_x
f)(t) = f(t-x),\,x\in \R^d$, and a \emph{modulation} or
\emph{frequency shift} is the operator $(M_\omega f)(t)= e^{2\pi
i\langle\omega,t\rangle}f(t),\,\omega\in\fhat \R^d$. A
\emph{time-frequency shift} is then
\[
(\pi(\lambda)f)(t)=(M_\omega T_x f)(t)=e^{2\pi i\langle
\omega,t\rangle}f(t-x),\quad \lambda=(x,\omega)\in\R^d\times\fhat\R^d.
\]
\begin{definition}\label{def:Gaborsys-d}
Let $\Lambda\subset\R^{2d}$ be a discrete set. A Gabor system $(g,
\Lambda)$ for $L^2(\R^d)$ is the set of all time-frequency shifts of the
window function $g$ by $\lambda=(x,\omega)\in\Lambda$, that is,
$(g,\Lambda)=\{\pi(\lambda)g: \lambda\in\Lambda\}$.
\end{definition}

Note that it is an easy consequence from Fourier analysis that
$(\chi_{[0,1)},\Z\times\Z)$ is an orthonormal basis for $L^2(\R)$. 
For $(\chi_{[0,1)},\Lambda)$, with $\Lambda\neq\Z\times\Z$, the situation is
already  quite delicate as shown in~\cite{Jans}.

While Gabor orthonormal bases are useful, the so-called Balian-Low
Theorem implies that they have a crucial shortcoming. Namely, if
$(g,\Lambda)$ is an  orthonormal basis for $L^2(\R)$, then $g$ can not be
well-localized in both time and frequency, in fact, we then have
\[\int|xg(x)|^2\,dx\int|\xi\fhat g(\xi)|^2\,d\xi=\infty.\]
Consequently, in Gabor analysis we resort to consider frames and Riesz bases.
\begin{definition}
A family of functions $\{\phi_k\}_{k\in\Z}\subset L^2(\R^d)$ with
\begin{equation}\label{eq:framedef}
    A\|f\|_2^2\, \leq\,  \sum_k |\langle f,\phi_k \rangle|^2 \,
    \leq \,
    B\|f\|_2^2\, , \quad f\in L^2(\R^d),
\end{equation}
for positive $A$ and $B$ is called a frame for $L^2(\R^d)$. The constants
$A$ and $B$ are called, respectively, a lower frame bound  and  an upper
frame bound of the frame $\{\phi_k\}$.
\end{definition}
\begin{definition}
A family of functions $\{e_k\}_{k\in\Z}\subset L^2(\R^d)$ with
\begin{equation}\label{eq:rbdef} \notag
    A\|c\|_2^2\, \leq\,  \|\sum_k c_ke_k \|_2^2 \,
    \leq \,
    B\|c\|_2^2\, , \quad c\in \ell^2_0(\R^d),
\end{equation}
for positive $A$ and $B$ is called a Riesz basis.
\end{definition}
For a detailed description of frame and Riesz basis theory we refer to
\cite{G01,C03}.

To consider lattices, or more general countable sets in $\R^{2d}$, we state
the  definition of density and summarize its role in Gabor analysis. Let
$\mathfrak{B}_d(R)$ denote the $l^2$-ball in $\R^{d}$ centered at 0 and with
radius $R$.
\begin{definition}\label{def:beurling}
The lower and
upper Beurling densities of $\calM\subset\R^d$ are given by, respectively,
\begin{equation}\label{eqdef:beurling} \notag
\begin{aligned}
D^-(\calM)&=\liminf_{R\to\infty}\inf_{z\in \R^d}\frac{|\calM\cap
\{\mathfrak{B}_d(R)+z\}|}{\pi R^d}, \\
D^+(\calM)&=\limsup_{R\to\infty}\sup_{z\in \R^d}\frac{|\calM\cap
\{\mathfrak{B}_d(R)+z\}|}{\pi R^d}.
\end{aligned}
\end{equation}
Whenever $D^-(\calM)=D^+(\calM)$, then $\calM$ is said to have a uniform
Beurling density, denoted by $D(\calM)=D^-(\calM)=D^+(\calM)$. $\calM$ is
uniformly separated if $\inf\{|\lambda-\mu|:\lambda\neq\mu\in\calM\}>0$. If
$\calM$ is a full-rank lattice, that is, $\calM=A\Z^{d},\det A\neq0$, then
$\calM$ is uniformly separated and  $D(\calM)=\frac1{\det A}$.
\end{definition}
\begin{theorem}[Density theorem]\label{thm:density} Let $g\in L^2(\R^d)$   and let $\Lambda$ be a full-rank lattice.
\begin{enumerate}
\item If $D(\Lambda)<1$, then $(g,\Lambda)$ is incomplete in
    $L^2(\R^d)$.
\item If $(g,\Lambda)$ is a frame for $L^2(\R^d)$, then
$D(\Lambda)\ge1$.
\item If $(g,\Lambda)$ is a Riesz basis for its closed linear span, then
$D(\Lambda)\le1$.
\end{enumerate}
Thus, if $(g,\Lambda)$ is an orthonormal basis, then $D(\Lambda)=1$.
\end{theorem}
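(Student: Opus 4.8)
The plan is to reduce all three statements to a single incompleteness result together with the Wexler--Raz duality principle, and to extract the quantitative core from the von Neumann algebra generated by the lattice shifts. Write $\Lambda=A\Z^{2d}$, so that $\mathrm{covol}(\Lambda)=|\det A|=1/D(\Lambda)$, and recall the commutation relation $\pi(\lambda)\pi(\mu)=c(\lambda,\mu)\pi(\lambda+\mu)$ with $c$ unimodular. First I would record the two elementary reductions. Since every frame is complete, part~2 is exactly the contrapositive of part~1. For part~3 I would invoke the duality principle: with $\Lambda^\circ$ the adjoint lattice $\{\mu:\pi(\mu)\pi(\lambda)=\pi(\lambda)\pi(\mu)\ \forall\lambda\in\Lambda\}=J(A^{T})^{-1}\Z^{2d}$, one has $\mathrm{covol}(\Lambda^\circ)=1/\mathrm{covol}(\Lambda)$, hence $D(\Lambda^\circ)=1/D(\Lambda)$, and $(g,\Lambda)$ is a Riesz sequence if and only if $(g,\Lambda^\circ)$ is a frame. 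Applying part~2 to $\Lambda^\circ$ then yields $D(\Lambda^\circ)\ge1$, that is $D(\Lambda)\le1$. Thus everything rests on (i) the duality principle and (ii) the incompleteness statement in part~1.

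For (i), the Wexler--Raz biorthogonality relations, obtained from the Janssen representation of the frame operator $S=\sum_{\lambda\in\Lambda}\langle\,\cdot\,,\pi(\lambda)g\rangle\pi(\lambda)g$ as a series of shifts along $\Lambda^\circ$, give the equivalence of frames at $\Lambda$ and Riesz sequences at $\Lambda^\circ$; this is structural and holds independently of any density bound, so there is no circularity. For (ii), I would pass to $\calM=\mathrm{vN}\{\pi(\lambda):\lambda\in\Lambda\}$ acting on $L^2(\R^d)$, whose commutant $\calM'$ is generated by the adjoint-lattice shifts and carries a canonical faithful normal trace. The closed span of $(g,\Lambda)$ is the cyclic subspace $\overline{\calM g}$, so completeness of $(g,\Lambda)$ means precisely that $g$ is a cyclic vector for $\calM$; a standard fact from Murray--von Neumann coupling theory is that $\calM$ admits a cyclic vector only if $\dim_{\calM'}L^2(\R^d)\le1$.

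The crux, and the main obstacle, is Rieffel's computation of this coupling constant, $\dim_{\calM'}L^2(\R^d)=\mathrm{covol}(\Lambda)=1/D(\Lambda)$; combined with the previous line it gives $1/D(\Lambda)\le1$, that is $D(\Lambda)\ge1$, completing part~1 and with it parts~2 and~3. Making this computation rigorous---identifying $\calM'$ with the adjoint-lattice algebra and transporting the trace so that $L^2(\R^d)$ acquires $\calM'$-dimension equal to the covolume---is the quantitative heart of the theorem and the place where the number $1$ (one degree of freedom per unit of phase-space volume) enters. I should note an operator-algebra-free alternative for parts~2 and~3 that also applies beyond lattices: verify the homogeneous approximation property for the frame $(g,\Lambda)$, and then compare, inside a large phase-space box $Q_R$, the count $|\Lambda\cap Q_R|\approx D(\Lambda)\,\mathrm{vol}(Q_R)$ of frame vectors against the essential number of degrees of freedom $\approx\mathrm{vol}(Q_R)$ of functions time--frequency concentrated in $Q_R$, controlled through trace estimates on the localized frame operator; there the same constant $1$ is again the true obstacle.
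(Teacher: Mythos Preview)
The paper does not prove this theorem; it is stated as background and attributed to the literature with the remark that the results ``have roots in various papers; they are nicely summarized in~[H07]''. So there is no proof in the paper to compare against.

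Your outline is a correct sketch of one of the standard arguments. The reduction of part~3 to part~2 via the duality principle, and of part~2 to part~1 via the trivial implication frame $\Rightarrow$ complete, is how these statements are usually linked. For part~1 you invoke Rieffel's computation of the coupling constant of the time--frequency von~Neumann algebra: the Murray--von~Neumann cyclic-vector criterion together with $\dim_{\calM'}L^2(\R^d)=\mathrm{covol}(\Lambda)$ forces incompleteness whenever $D(\Lambda)<1$. You are right that this computation is the genuine content, and the alternative you mention at the end via the homogeneous approximation property (Ramanathan--Steger) is indeed the other main route in the literature.

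Two small points of precision. First, part~2 is not literally the contrapositive of part~1; it follows from the contrapositive of part~1 \emph{together with} the implication frame $\Rightarrow$ complete. You effectively say this, but the wording ``exactly the contrapositive'' is loose. Second, the frame/Riesz-sequence equivalence between $\Lambda$ and $\Lambda^\circ$ is the Ron--Shen/Janssen duality principle; the Wexler--Raz biorthogonality relations are a closely related but distinct statement about dual windows. Neither point affects the validity of your argument.
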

The results listed in Theorem~\ref{thm:density} have roots in various papers;
 they are nicely summarized in~\cite{H07}.
In the one-dimensional case, Gaussian Gabor frames and Riesz bases are
 well characterized~\cite{Lyu92,SW92}.
\begin{theorem}\label{th:lyub}
Let $\Lambda\subset\R^2$. The Gabor system $(\gauss_1,\Lambda)$ is a
frame if and only if there exists a uniformly separated
$\Lambda'\subset\Lambda$ such  that $1<D^-(\Lambda')\le
D^+(\Lambda)<\infty$. If $\Lambda$ is uniformly separated and
$D^+(\Lambda)<1$, then $(\gauss_1,\Lambda)$ is a Riesz basis of a proper
subspace of $L^2(\R)$.
\end{theorem}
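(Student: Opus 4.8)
The plan is to transfer the problem from $L^2(\R)$ to the Bargmann--Fock space $\calF^2$ of entire functions $F$ on $\C$ with $\|F\|_{\calF^2}^2 = \int_\C |F(z)|^2 e^{-\pi|z|^2}\,dz < \infty$, where the frame and Riesz-basis properties of the Gaussian Gabor system become, respectively, sampling and interpolation properties of a point sequence in $\C$. The bridge is the Bargmann transform $B\colon L^2(\R)\to\calF^2$, a unitary isomorphism, together with the elementary identity relating the short-time Fourier transform $V_{\gauss_1}f(x,\omega)=\langle f,\pi(x,\omega)\gauss_1\rangle$ to $B$: up to a unimodular factor and a fixed constant one has $|V_{\gauss_1}f(x,\omega)| = |Bf(z)|\,e^{-\pi|z|^2/2}$, where $z=z_\lambda$ is the complex coordinate attached to $\lambda=(x,\omega)$. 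First I would record this dictionary, fixing $\lambda\mapsto z_\lambda$ and verifying the identity by a direct Gaussian integral; this also pins down the normalisation under which the critical Beurling density of Definition~\ref{def:beurling} equals $1$.

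Second, since $B$ is an isometry, $\|f\|_2=\|Bf\|_{\calF^2}$, the frame inequality \eqref{eq:framedef} for $(\gauss_1,\Lambda)$ is equivalent to the sampling inequality
\[
A\|F\|_{\calF^2}^2 \;\le\; \sum_{\lambda\in\Lambda} |F(z_\lambda)|^2\, e^{-\pi|z_\lambda|^2} \;\le\; B\|F\|_{\calF^2}^2, \qquad F\in\calF^2 .
\]
Thus $(\gauss_1,\Lambda)$ is a frame if and only if $\{z_\lambda\}$ is a sampling sequence for $\calF^2$: the upper (Bessel) bound is equivalent to $\{z_\lambda\}$ being a finite union of uniformly separated sets, i.e.\ $D^+(\Lambda)<\infty$, while the lower bound encodes completeness and stability. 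In the same way the Riesz-basis inequality in the corresponding definition translates into the statement that $\{z_\lambda\}$ is an interpolating sequence for $\calF^2$, the upper bound again forcing uniform separation together with $D^+(\Lambda)<\infty$.

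Third, I would invoke the density characterisations of sampling and interpolation in $\calF^2$: a uniformly separated sequence is sampling exactly when its lower Beurling density exceeds the critical value $1$, and interpolating exactly when its upper density lies below $1$. Combined with the fact that any sampling sequence may be thinned to a uniformly separated sampling subsequence, this gives both directions of the frame characterisation---$(\gauss_1,\Lambda)$ is a frame if and only if some uniformly separated $\Lambda'\subset\Lambda$ satisfies $1<D^-(\Lambda')$ while $D^+(\Lambda)<\infty$---as well as the Riesz-basis conclusion for uniformly separated $\Lambda$ with $D^+(\Lambda)<1$; here the span is automatically a \emph{proper} subspace, since $D^+(\Lambda)<1$ rules out sampling and hence completeness by the first part.

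The hard part is precisely this last density characterisation of sampling and interpolating sequences in the Bargmann--Fock space, which is the deep analytic content of \cite{Lyu92,SW92}; I would take it as given and cite those works, since reproving it requires the full Seip--Wallsten apparatus of entire-function growth estimates, Beurling-type densities, and an $\overline\partial$ or Lagrange-interpolation construction. A secondary subtlety is the bookkeeping in passing between the ``frame'' and the ``uniformly separated subsequence'' formulations: one must check that thinning $\Lambda$ to achieve uniform separation preserves the lower sampling density, and that the Bessel bound is controlled exactly by the finiteness of $D^+(\Lambda)$.
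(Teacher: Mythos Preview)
Your approach is correct and coincides with what the paper does: Theorem~\ref{th:lyub} is not proved in the paper at all but is quoted from \cite{Lyu92,SW92}, and the surrounding discussion sketches exactly the Bargmann-transform reduction you describe, leading to the equivalence of the frame property with sampling in $\calF(\C)$. Your proposal adds the interpolation side for the Riesz-basis statement and the bookkeeping about thinning to a uniformly separated subsequence, but the core strategy---transfer to Fock space and invoke the Lyubarski/Seip--Wallst\'en density theorems---is the same, and you are right that the hard analytic content must simply be cited.
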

In their proofs, Lyubarski and Seip-Wallsten used methods from complex
analysis; the connection between Gaussian Gabor frames and complex
analysis is described below.

The \emph{short-time Fourier transform}, also called \emph{continuous
Gabor transform} or \emph{windowed Fourier transform} is defined formally
by
\begin{equation}\label{def:STFT}\notag
V_g f(x, \omega) = \int_{\R^d} f(t) e^{-2 \pi i \omega
t}\overline{g(t-x)} dt = \langle f, M_\omega T_x g\rangle = \langle
\fhat f, T_\omega M_{-x} \fhat g\rangle
\end{equation}
If $\|g\|_2=1$, for example, if $g=\gauss_d$, then  the short-time Fourier
transform is a unitary operator, so
$\|V_gf\|_{L^2(\R^{2d})}^2=\|f\|^2$. 
In the Gabor case, the frame property~\eqref{eq:framedef}   becomes a
sampling set condition: there exist constants $A,B>0$ such that
\begin{equation}\label{eq:equiv} \notag
A\|V_g
f\|_{L^2(\R^{2d})}^2\le\sum_{\lambda\in\Lambda}|V_g f(\lambda)|^2\le B\|V_g
f\|_{L^2(\R^{2d})}^2,\quad f\in \LtR.
\end{equation}

The Bargmann-Fock space is defined by
\[\calF(\C^d)=\left\{F-\text{entire with }\|F\|_\calF=\left(\int_{\C^d}|F(z)|^2e^{-\pi|z|^2}dz\right)^{\frac12}<\infty\right\}.\]
The Bargmann transform $B$ maps $L^2(\R^d)$ unitarily onto
$\calF(\C^d)$ by
\begin{equation}\label{eq:bt}\notag B:f\mapsto
Bf:z\mapsto2^{\frac14}\int f(t)e^{2\pi itz-\pi
t^2-\pi\frac{z^2}2}dt,\quad z\in\C^d.\end{equation}
With this notation it is easy to see that
\begin{equation*}
V_{\gauss_d}f(x,-\xi)=e^{2\pi ix\xi}Bf(x+i\xi)e^{-\frac\pi2|x+iy|^2},
\end{equation*}
\cite{Lyu92,SW92}.  This demonstrates that $(\gauss_d,\Lambda)$ is a
frame if and only if $\Lambda$ is a sampling set for $\calF$. In $d=1$ this
was used to prove Theorem~\ref{th:lyub}~\cite{SW92,Lyu92}. But in higher
dimension it appears as if as little is known about sampling in
Bargmann-Fock spaces as is known about multivariate Gaussian Gabor
frames (see \cite{GroGa} for a more detailed discussion of this).




\section{Gaussian Gabor frames for $L^2(\R^d)$}\label{section:propositions}

The easiest way to create frames for $L^2(\R^d),d\ge2,$ is to take tensor products of lower-dimensional frame systems. 
For $n$ lattices $\Lambda_1,\ldots,\Lambda_n$ of the same dimension, we set $\odot_{i=1}^n\Lambda_i=\{(x_1,\ldots,x_n)\times(\omega_1,\ldots,\omega_n):(x_i,\omega_i)\in\Lambda_i\}$.
\begin{lemma}\label{lemma:tensor-gabor-frames}
Let $(g,\Lambda_1)$ and $(h,\Lambda_2)$ be frames for
$L^2(\R^d)$. Then $(g\otimes h,\Lambda_1\odot\Lambda_2)$ is a
frame for $L^2(\R^{2d})$.
\end{lemma}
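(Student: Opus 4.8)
The plan is to exploit the tensor structure of the problem. I identify $L^2(\R^{2d})$ with the Hilbert space tensor product $L^2(\R^d)\otimes L^2(\R^d)$ via $(f\otimes\phi)(t_1,t_2)=f(t_1)\phi(t_2)$, recalling that the simple tensors $f\otimes\phi$ span a dense subspace. The first step is to record the factorization of the time-frequency shifts: for $\lambda_1=(x_1,\omega_1)\in\Lambda_1$ and $\lambda_2=(x_2,\omega_2)\in\Lambda_2$, a direct computation from the definition of $\pi$ gives
\[
\pi(\lambda_1,\lambda_2)(g\otimes h)=\big(\pi(\lambda_1)g\big)\otimes\big(\pi(\lambda_2)h\big),
\]
where $(\lambda_1,\lambda_2)$ denotes the corresponding point of $\Lambda_1\odot\Lambda_2$. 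Consequently, for a simple tensor $F=f\otimes\phi$ the analysis coefficients factor as $\langle F,\pi(\lambda_1,\lambda_2)(g\otimes h)\rangle=\langle f,\pi(\lambda_1)g\rangle\,\langle\phi,\pi(\lambda_2)h\rangle$, so the full double sum over $\Lambda_1\odot\Lambda_2$ splits as a product of the two single-lattice coefficient sums.

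Next I would establish the frame inequality on simple tensors. Writing $A_1,B_1$ for the frame bounds of $(g,\Lambda_1)$ and $A_2,B_2$ for those of $(h,\Lambda_2)$, and using $\|f\otimes\phi\|^2=\|f\|^2\|\phi\|^2$, the product of the two factored sums is squeezed to give
\[
A_1A_2\|F\|^2\le\sum_{(\lambda_1,\lambda_2)}|\langle F,\pi(\lambda_1,\lambda_2)(g\otimes h)\rangle|^2\le B_1B_2\|F\|^2
\]
for every simple tensor $F$. The upper (Bessel) bound then extends to all of $L^2(\R^{2d})$ by a routine density argument: the analysis operator is bounded on the dense span of simple tensors and hence extends continuously, so the Bessel estimate with constant $B_1B_2$ holds for arbitrary $F$.

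The crux is to promote the lower bound from simple tensors to arbitrary $F$, where a naive density argument fails because lower frame bounds are not stable under limits. To handle this I would pass to the frame operator $S$ of $(g\otimes h,\Lambda_1\odot\Lambda_2)$, which is well defined and bounded thanks to the Bessel estimate just obtained. Letting $S_1,S_2$ be the frame operators of the two given frames, the factorization above shows $S(f\otimes\phi)=(S_1 f)\otimes(S_2\phi)$ on simple tensors, hence $S=S_1\otimes S_2$ on the dense span and therefore everywhere by boundedness. I then invoke the positivity identity
\[
S_1\otimes S_2-A_1A_2\,I=S_1\otimes(S_2-A_2 I)+(S_1-A_1 I)\otimes(A_2 I),
\]
in which both summands are tensor products of positive operators (using $S_1\ge A_1 I>0$, $S_2-A_2 I\ge 0$, $S_1-A_1 I\ge 0$, and $A_2 I>0$), so that $S\ge A_1A_2\,I$; an analogous identity yields $S\le B_1B_2\,I$.

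Since the operator inequality $A\,I\le S\le B\,I$ is equivalent to the frame inequality \eqref{eq:framedef} with bounds $A,B$, this gives the claim with explicit bounds $A_1A_2$ and $B_1B_2$. The main obstacle is exactly the lower-bound extension, and routing it through the frame operator together with the tensor-positivity identity is what makes it clean; everything else amounts to bookkeeping on simple tensors and a standard density argument.
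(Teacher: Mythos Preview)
The paper does not supply its own proof of this lemma; it simply refers the reader to~\cite{PRtr}. Your argument is therefore a self-contained substitute rather than something to compare against an in-text proof.

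Your approach is correct and standard: the factorization $\pi(\lambda_1,\lambda_2)(g\otimes h)=\pi(\lambda_1)g\otimes\pi(\lambda_2)h$ yields $S=S_1\otimes S_2$ for the frame operators, and your positivity identity cleanly delivers $A_1A_2\,I\le S\le B_1B_2\,I$. One small point worth tightening: the claim that ``the analysis operator is bounded on the dense span of simple tensors'' does not follow from the Bessel inequality on \emph{simple} tensors alone, since simple tensors do not form a linear subspace and a norm bound on a spanning set need not persist on linear combinations. The clean fix is to observe directly that the analysis (equivalently, synthesis) operator of the product system is $C_1\otimes C_2$ (respectively $D_1\otimes D_2$), and the Hilbert-space tensor product of bounded operators is bounded with norm equal to the product of the norms; this gives the Bessel bound $B_1B_2$ on all of $L^2(\R^{2d})$ without any density step. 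With that adjustment your proof is complete, and in fact the same remark already justifies $S=S_1\otimes S_2$ globally, after which your operator inequality finishes the argument.
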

For a simple proof we refer to~\cite{PRtr}.

\begin{proposition}\label{prop:ex1}
Let $\Lambda=\Z^2\times \left(\begin{smallmatrix}
a&0\\
0&b\end{smallmatrix}\right)\Z^2$. If $a< 1$ and $b< 1$, then $(\gauss_2,\Lambda)$ is a
frame for $L^2(\R^2)$. If $a=b=1$, $(\gauss_2,\Lambda)$ is complete in $L^2(\R^2)$, but
not a frame. If $a>1$ or $b>1$, then $(\gauss_2,\Lambda)$ is incomplete.
\end{proposition}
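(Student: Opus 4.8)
The plan is to exploit the tensor-product structure of the entire configuration. Since $\gauss_2=\gauss_1\otimes\gauss_1$ and the lattice splits as $\Lambda=\Lambda_1\odot\Lambda_2$ with $\Lambda_1=\Z\times a\Z$ and $\Lambda_2=\Z\times b\Z$, a time-frequency shift factors as $\pi(x_1,x_2,\omega_1,\omega_2)\gauss_2=\bigl(\pi(x_1,\omega_1)\gauss_1\bigr)\otimes\bigl(\pi(x_2,\omega_2)\gauss_1\bigr)$. Consequently every inner product against a Gabor atom factors, $\langle f_1\otimes f_2,\,\pi(\lambda_1)\gauss_1\otimes\pi(\lambda_2)\gauss_1\rangle=\langle f_1,\pi(\lambda_1)\gauss_1\rangle\,\langle f_2,\pi(\lambda_2)\gauss_1\rangle$, and the whole problem reduces to the one-dimensional systems $(\gauss_1,\Lambda_1)$ and $(\gauss_1,\Lambda_2)$. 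The relevant densities are $D(\Lambda_1)=1/a$ and $D(\Lambda_2)=1/b$.

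For the first claim ($a<1$, $b<1$), note $D(\Lambda_1)=1/a>1$ and $D(\Lambda_2)=1/b>1$; since $\Lambda_1,\Lambda_2$ are uniformly separated lattices, Theorem~\ref{th:lyub} (applied with $\Lambda'=\Lambda_i$) shows that $(\gauss_1,\Lambda_1)$ and $(\gauss_1,\Lambda_2)$ are frames for $L^2(\R)$. The conclusion is then immediate from Lemma~\ref{lemma:tensor-gabor-frames}. For the third claim, assume without loss of generality $a>1$, so $D(\Lambda_1)=1/a<1$ and Theorem~\ref{thm:density}(1) gives a nonzero $f_0\in L^2(\R)$ orthogonal to every $\pi(\lambda_1)\gauss_1$, $\lambda_1\in\Lambda_1$. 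Then $f_0\otimes\gauss_1$ is a nonzero element of $L^2(\R^2)$ whose inner product against each $\pi(\lambda)\gauss_2$ vanishes by the factorization above, so $(\gauss_2,\Lambda)$ is incomplete.

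The delicate case, and the one I expect to be the main obstacle, is the critical case $a=b=1$, where $\Lambda_1=\Lambda_2=\Z\times\Z$. Here neither the frame lemma nor the density theorem settles matters, because the ambient density $D(\Lambda)=1/(ab)=1$ sits exactly at the threshold. I would argue \emph{completeness} from the classical fact that $(\gauss_1,\Z\times\Z)$ is complete in $L^2(\R)$, together with the observation that tensor products of two complete families span a dense subspace of $L^2(\R^2)=L^2(\R)\otimes L^2(\R)$. For the \emph{failure of the frame property} I would show the lower bound cannot hold: by Theorem~\ref{th:lyub} no subset $\Lambda'\subset\Z\times\Z$ satisfies $D^-(\Lambda')>1$, so $(\gauss_1,\Z\times\Z)$ is not a frame and there is a sequence $f_n\in L^2(\R)$ with $\|f_n\|_2=1$ and $\sum_{\lambda_1}|\langle f_n,\pi(\lambda_1)\gauss_1\rangle|^2\to0$. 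Setting $F_n=f_n\otimes\gauss_1$ and using the factorization, the Gabor coefficient sum for $F_n$ equals $\bigl(\sum_{\lambda_1}|\langle f_n,\pi(\lambda_1)\gauss_1\rangle|^2\bigr)\bigl(\sum_{\lambda_2}|\langle\gauss_1,\pi(\lambda_2)\gauss_1\rangle|^2\bigr)$; the first factor tends to $0$ while the second is a finite Bessel constant (as $\Z\times\Z$ is uniformly separated), so the sum tends to $0$ although $\|F_n\|_2=1$, ruling out a positive lower frame bound. The only points needing care are justifying the density of tensor products of complete sets and the uniform Bessel bound for the Gaussian system, both of which are routine.
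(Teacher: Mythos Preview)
Your argument is correct in all three regimes. For $a,b<1$ and for $a>1$ or $b>1$ you proceed exactly as the paper does: invoke Theorem~\ref{th:lyub} on each one-dimensional factor and then apply Lemma~\ref{lemma:tensor-gabor-frames} for the frame case, respectively the density theorem and the factorization of $V_{\gauss_2}$ on simple tensors for the incompleteness case.

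For the critical case $a=b=1$ you take a genuinely different route from the paper. The paper argues via the Zak transform: it factorizes $Z\gauss_2(x,\omega)=Z\gauss_1(x_1,\omega_1)\,Z\gauss_1(x_2,\omega_2)$, observes that $Z\gauss_1$ vanishes only on a null set (giving completeness by the Zak-transform criterion), and then uses continuity of $Z\gauss_2$ together with its zeros to conclude that $|Z\gauss_2|$ is not essentially bounded away from $0$, hence no lower frame bound. Your approach stays entirely on the Gabor side: completeness via the (correct and routine) fact that tensor products of two complete systems are complete in the Hilbert tensor product, and failure of the lower bound via an explicit bad sequence $F_n=f_n\otimes\gauss_1$, exploiting that $(\gauss_1,\Z\times\Z)$ is Bessel but not a frame. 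Both arguments ultimately rest on the same one-dimensional input (completeness but non-frameness of $(\gauss_1,\Z\times\Z)$); the Zak-transform proof packages this through a standard characterization and is slightly more self-contained once one has that machinery, while your argument is more elementary and transparently displays the tensor mechanism that drives the whole proposition.
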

Note that here $D(\Lambda)=\frac1{ab}$.
 Below we shall use the Zak transform which is defined via the series
\[Zf(x,\omega)=\sum_{k\in\Z^d}f(x-k)e^{2\pi i k\omega},\quad(x,\omega)\in\R^{2d}.\]
For a detailed presentation of the properties of the Zak transform we refer
to~\cite{G01,C03}.

\textit{Proof of Proposition~\ref{prop:ex1}}. If $a< 1, b< 1$, then
Theorem~\ref{th:lyub} implies $(\gauss_1,\Z\times a\Z)$ and
$(\gauss_1,\Z\times b\Z)$ are frames for $L^2(\R)$.
Lemma~\ref{lemma:tensor-gabor-frames} implies that
$(\gauss_2,\Lambda)$ is a frame for $L^2(\R^2)$.


To show completeness of the Gabor system for $a=b=1$, we 
observe that for
$(x,\omega)=(x_1,x_2,\omega_1,\omega_2)$
\begin{equation*}
Z \gauss_2(x,\omega)=Z \gauss_1(x_1,\omega_1)\cdot Z \gauss_1(x_2,\omega_2)
\end{equation*}
Because $(\gauss_1,\Z\times\Z)$ is complete in $L^2(\R)$, but not a frame,
according to Proposition 9.4.3 in~\cite{C03}, $Z\gauss_1$ vanishes on a set
of measure zero in $[0,1)^2$. Hence, the Zak transform $Z\gauss_2$
vanishes only on a set of zero measure in $[0,1)^4$. According to
Proposition 9.4.3 in~\cite{C03}, $(\gauss_2,\Z^2\times\Z^2)$ is complete.
Furthermore, since $\gauss_2\in\calS(\R^2)$, its Zak transform is
continuous. Hence,  it is not bounded away from 0 almost everywhere.
Proposition 8.3.2 in~\cite{G01} implies that the Gabor system
$(\gauss_2,\Z^2\times \Z^2)$ is not a frame for $L^2(\R^2)$.

If $a>1$ or $b>1$, 
say $b>1$, then 
$(\gauss_1,\Z\times b\Z)$ is incomplete in $L^2(\R)$. Hence we can choose
$f_1\in L^2(\R^d),f_1\neq0$ such that $V_{\gauss_1}f_1(m_1,bm_1)=0$ for
all $(m_1,n_1)\in\Z^2$. Then for any $f_2\in L^2(\R),f_2\neq0$, the
STFT
\begin{equation}\label{eq:eqn-factoriz}
V_{\gauss_2}(f_1\otimes f_2)(m_1,m_2,an_1,bn_2)=V_{\gauss_1}(m_1,an_1)V_{\gauss_1}(m_2,b n_2)=0.
\end{equation}
But $f_1\otimes f_2\neq0$, so $(\gauss_2,\Z\times\Z\times a\Z\times b\Z)$ is incomplete.
\hfill$\square$
\begin{proposition}\label{prop:gaussian-block-diagonal}
Let $\Lambda=\Z^2\times\left(\begin{smallmatrix}
a& a\\
-b&b
\end{smallmatrix}\right)\Z^2$. Then the Gabor system $(\gauss_2,\Lambda)$ is a
frame for $L^2(\R^2)$ if  $a,b<\frac12$. If $a=b=\frac12$, then
$(\gauss_2,\Lambda)$ is complete but not a frame for $L^2(\R^2)$. If
$a,b>\frac12$, then $(\gauss_2,\Lambda)$ is incomplete.
\end{proposition}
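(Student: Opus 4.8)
The plan is to exploit the special arithmetic of the frequency lattice $M\Z^2$, $M=\left(\begin{smallmatrix}a&a\\-b&b\end{smallmatrix}\right)$. Writing $\omega=Mk$ one computes $M\Z^2=\{(am,bn):m+n\text{ even}\}$, the ``checkerboard'' sublattice of $a\Z\times b\Z$; in particular $\Lambda=\Z^2\times M\Z^2$ is \emph{not} a product lattice, so Lemma~\ref{lemma:tensor-gabor-frames} does not apply directly. The key observation is that the checkerboard splits as a disjoint union $M\Z^2=D_{ee}\sqcup D_{oo}$ of its even-even part $D_{ee}=2a\Z\times 2b\Z$ and its odd-odd coset $D_{oo}=(a,b)+2a\Z\times 2b\Z$, each of which is a (coset of a) rectangular product lattice. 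Since $\gauss_2=\gauss_1\otimes\gauss_1$, the factorisation $V_{\gauss_2}(f_1\otimes f_2)(x,\omega)=V_{\gauss_1}f_1(x_1,\omega_1)\,V_{\gauss_1}f_2(x_2,\omega_2)$ reduces every statement about $\Lambda$ to one-dimensional facts about $(\gauss_1,\Z\times c\Z)$, each governed through Theorem~\ref{th:lyub} by whether the density $\tfrac1c$ exceeds $1$.

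For the positive results I would squeeze $\Lambda$ between product lattices. Because $D_{ee}\subseteq M\Z^2$, the product lattice $\Lambda'=(\Z\times 2a\Z)\odot(\Z\times 2b\Z)\subseteq\Lambda$. When $a,b<\tfrac12$ the one-dimensional systems $(\gauss_1,\Z\times 2a\Z)$ and $(\gauss_1,\Z\times 2b\Z)$ have densities $\tfrac1{2a},\tfrac1{2b}>1$ and hence are frames by Theorem~\ref{th:lyub}; Lemma~\ref{lemma:tensor-gabor-frames} makes $(\gauss_2,\Lambda')$ a frame, and the inclusion $\Lambda'\subseteq\Lambda$ together with the automatic Bessel bound for the lattice system $(\gauss_2,\Lambda)$ upgrades this to a frame for $L^2(\R^2)$. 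At $a=b=\tfrac12$ one has $2a=2b=1$, so $D_{ee}=\Z^2$ and $\Lambda\supseteq\Z^2\times\Z^2$; completeness of $(\gauss_2,\Lambda)$ is then inherited from completeness of $(\gauss_2,\Z^2\times\Z^2)$, already established in Proposition~\ref{prop:ex1}.

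For $a,b>\tfrac12$ I would produce an explicit annihilating product $f_1\otimes f_2$. Since $\tfrac1{2a}<1$, the system $(\gauss_1,\Z\times 2a\Z)$ is incomplete, so there is $f_1\neq0$ with $V_{\gauss_1}f_1\equiv0$ on $\Z\times 2a\Z$, which kills the $D_{ee}$-part of $V_{\gauss_2}(f_1\otimes f_2)$. For the $D_{oo}$-part the relevant frequencies are the odd multiples $b(2\Z+1)$; modulating by $M_{-b}$ identifies $(\gauss_1,\Z\times(b+2b\Z))$ with $(\gauss_1,\Z\times 2b\Z)$, again incomplete since $\tfrac1{2b}<1$, so there is $f_2\neq0$ with $V_{\gauss_1}f_2\equiv0$ on $\Z\times(b+2b\Z)$. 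Then $V_{\gauss_2}(f_1\otimes f_2)$ vanishes on all of $\Lambda=\Z^2\times(D_{ee}\cup D_{oo})$ while $f_1\otimes f_2\neq0$. I would stress that this is the conceptually surprising case: $D(\Lambda)=\tfrac1{2ab}$ may exceed $1$ (e.g.\ $a=b=0.6$), so the density theorem is silent, and incompleteness is forced purely by the checkerboard geometry.

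The delicate point, and the step I expect to be the main obstacle, is that $(\gauss_2,\Lambda)$ fails to be a frame at $a=b=\tfrac12$ even though $D(\Lambda)=2$; here neither the density theorem nor the (non-integer lattice) Zak transform applies directly. My plan is to defeat the lower frame bound with a product test sequence $f_1^{(k)}\otimes f_2^{(k)}$ of unit-norm functions. Abbreviating $P(h)=\sum_{\Z\times\Z}|V_{\gauss_1}h|^2$ and $Q(h)=\sum_{\Z\times(\frac12+\Z)}|V_{\gauss_1}h|^2$, the frame sum over $\Lambda$ equals $P(f_1^{(k)})P(f_2^{(k)})+Q(f_1^{(k)})Q(f_2^{(k)})$. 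Because $(\gauss_1,\Z\times\Z)$ and its modulate $(\gauss_1,\Z\times(\frac12+\Z))$ are both complete-but-not-frames, $\inf_{\|h\|=1}P(h)=\inf_{\|h\|=1}Q(h)=0$; meanwhile $(\gauss_1,\Z\times\tfrac12\Z)$ \emph{is} a frame (density $2$), yielding a uniform Bessel bound $P(h)+Q(h)\le B\|h\|^2$ and hence $P,Q\le B$ individually. Choosing $f_1^{(k)}$ with $P(f_1^{(k)})\to0$ and $f_2^{(k)}$ with $Q(f_2^{(k)})\to0$ then forces $P(f_1^{(k)})P(f_2^{(k)})\le B\,P(f_1^{(k)})\to0$ and $Q(f_1^{(k)})Q(f_2^{(k)})\le B\,Q(f_2^{(k)})\to0$, so the total frame sum tends to $0$ while $\|f_1^{(k)}\otimes f_2^{(k)}\|=1$. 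The crux is precisely this complementarity: no single $h$ can make $P(h)$ and $Q(h)$ simultaneously small, but the product structure lets the two unit-norm sequences attack the two cosets $D_{ee}$ and $D_{oo}$ independently.
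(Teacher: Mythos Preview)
Your proposal is correct and follows essentially the same route as the paper: both hinge on the decomposition $M\Z^2=(2a\Z\times 2b\Z)\cup\big((a,b)+2a\Z\times 2b\Z\big)$, use the tensor factorisation of $V_{\gauss_2}$ to reduce to one-dimensional Gaussian Gabor systems governed by Theorem~\ref{th:lyub}, and defeat the lower frame bound at $a=b=\tfrac12$ via exactly your $P(f_1)P(f_2)+Q(f_1)Q(f_2)$ splitting with $P(f_1),Q(f_2)<\epsilon$ and a one-dimensional Bessel bound controlling the remaining factors. The only cosmetic difference is that for $a,b<\tfrac12$ the paper phrases the frame conclusion as ``union of two frames'' rather than your ``sublattice frame plus Bessel bound'', but these are equivalent.
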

Note that here $D(\Lambda)=\frac1{2ab}$, so $(\gauss_2,\Lambda)$ is {\it
a-priori} not a frame  if $2ab>1$.

\begin{proof}
For $\Lambda=\Z^2\times\left(\begin{smallmatrix}
a& a\\
-b&b
\end{smallmatrix}\right)\Z^2$, $F=f_1\otimes
f_2\in L^2(\R^2)$, we have
\begin{equation*}
V_{\gauss_2}F(m_1,m_2,a(n_1+n_2),b(n_2-n_1))
=V_{\gauss_1}f_1(m_1,a(n_1+n_2))\cdot V_{\gauss_1}f_2(m_2,b(n_2-n_1))
\end{equation*}
If $n_1,n_2$ are of the same parity, then $n_1\pm n_2$ is always even,
otherwise, $n_1\pm n_2$ is odd. Hence, if $a,b>\frac 12$, then we can
choose a nonzero $f_1\in L^2(\R)$ such that
$V_{\gauss_1}f_1(m_1,a(n_1+n_2))=0$, for all $m_1$ and all $(n_1, n_2)$
with $n_1-n_2$ even, and a nonzero $f_2\in L^2(\R)$ such that
$V_{\gauss_1}f_2(m_2,b(n_2-n_1))=0$, for all $m_2$, and all $(n_1, n_2)$
with $n_1-n_2$ odd, since the densities of the respective lattices in $\R^2$
are greater than 1. Then $F=f_1\otimes f_2\neq 0$ but
\[V_{\gauss_2}F(m_1,m_2,a(n_1+n_2),b(n_2-n_1))=0,\quad\forall
m_1,m_2,n_1,n_2,\] implying incompleteness of $(\gauss_2,\Lambda)$ for all $a,b>\frac12$.

We note further that
\begin{equation*}
\begin{aligned}
\Lambda&=\{(m_1,m_2,2ak_1,2bk_2)^T:m_1,m_2,k_1,k_2\in\Z\}\\
&\cup\{(m_1,m_2,2ak_1+a,2bk_2+b)^T:m_1,m_2,k_1,k_2\in\Z\}.
\end{aligned}
\end{equation*}
If $a,b=\frac12$, the system $(\gauss_2,\Lambda)$ is complete, because it
is the union of two complete systems. However, it is not a frame for
$L^2(\R^2)$: we can choose $\epsilon>0$ and
$f_1,f_2\in L^2(\R)$ with unit norm such that
\[
\sum_{k,l\in\Z}|V_{\gauss_1}f_1(k,l)|^2<\epsilon,\quad
\sum_{k,l\in\Z}|V_{\gauss_1}f_2(k,l+\tfrac12)|^2<\epsilon.
\]
Then letting $F=f_1\otimes f_2$, it is not difficult to see that
\begin{align*}
&\sum_{m_1,m_2,n_1,n_2} |V_{\gauss_2}F(m_1,m_2,\tfrac12(n_1+n_2), \tfrac12(n_2-n_1)|^2
\\ &\qquad =\sum_{m_1,m_2,\underbrace{n_1,n_2}_{2|n_1-n_2}} |V_{\gauss_2}F(m_1,m_2,\tfrac12(n_1+n_2), \tfrac12(n_2-n_1)|^2
 \\ &\qquad \qquad   +\sum_{m_1,m_2,\underbrace{n_1,n_2}_{2\nmid n_1-n_2}}|V_{\gauss_2}F(m_1,m_2,\tfrac12(n_1+n_2), \tfrac12(n_2-n_1)|^2\\
 &\qquad \le
\sum_{k,l\in\Z}|V_{\gauss_1}f_1(k,l)|^2\cdot\sum_{k,l\in\Z}|V_{\gauss_1}f_2(k,l)|^2 \\ & \qquad \qquad +
\sum_{k,l\in\Z}|V_{\gauss_1}f_1(k,l+\tfrac12)|^2\cdot\sum_{k,l\in\Z}|V_{\gauss_1}f_2(k,l+\tfrac12)|^2 \le 2C\epsilon,
\end{align*}
where $C$ is the norm of the $\ell^2(\Z^2)$-valued bounded analysis operator $D:f\mapsto\{V_{\gauss_1}f(\lambda):\lambda\in\Z^2\}$, see~\cite{G01}, Proposition 12.2.5.
This implies that no lower frame
bound exists for $(\gauss_2,\Lambda)$.

If  $a,b<\frac12$, then $(\gauss_2,\Lambda)$ is a frame for $L^2(\R^2)$,
because it is the union of two frames for $L^2(\R^2)$. \end{proof}

\emph{Remark}: Unfortunately, the cases $a>\frac12,b<\frac12$ or $a<\frac12,b>\frac12$ are not answered by Proposition~\ref{prop:gaussian-block-diagonal}.

Generalizing the ideas underlying
Proposition~\ref{prop:gaussian-block-diagonal} leads to the following
 result for lattices $\Lambda$ with a particular subgroup structure:
\begin{theorem}\label{th:thmg}
Let $\odot_{i=1}^d A_i\Z^2$ be a subgroup of $\Lambda\subset\R^{2d}$ of index
$n$. If there exist natural numbers $l_i, 1\le i\le d$, such that
$\sum_{i=1}^d l_i=n$ and $l_i<\det A_i$, then the system $(\gauss_d,\Lambda)$ is
incomplete in $\LtR$.
\end{theorem}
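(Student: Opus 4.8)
The plan is to prove incompleteness by exhibiting a single nonzero $f\in\LtR$ whose short-time Fourier transform $V_{\gauss_d}f$ vanishes on all of $\Lambda$; since $\gauss_d=\otimes_{i=1}^d\gauss_1$, I would look for $f$ of tensor-product form $f=f_1\otimes\cdots\otimes f_d$, for which the transform factors as $V_{\gauss_d}f(x,\omega)=\prod_{i=1}^d V_{\gauss_1}f_i(x_i,\omega_i)$, exactly as in \eqref{eq:eqn-factoriz}. Each factor $f_i$ will be chosen to annihilate a suitable union of cosets in the $i$-th copy of $\R^2$, and the tensor structure of the subgroup $G=\odot_{i=1}^d A_i\Z^2$ will guarantee that at least one factor vanishes at every point of $\Lambda$.

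First I would decompose $\Lambda$ into the $n$ cosets of its index-$n$ subgroup $G$, writing $\Lambda=\bigcup_{j=1}^n(\mu_j+G)$ with representatives $\mu_j=(\mu_{j,1},\ldots,\mu_{j,d})$, $\mu_{j,i}\in\R^2$. Because $G$ is a product, within a fixed coset $\mu_j+G$ the $i$-th two-dimensional block ranges freely over the full coset $\mu_{j,i}+A_i\Z^2$ as $(k_1,\ldots,k_d)\in(\Z^2)^d$ vary. I would then use $\sum_{i=1}^d l_i=n$ to partition the index set $\{1,\ldots,n\}$ of cosets into blocks $J_1,\ldots,J_d$ with $|J_i|=l_i$, thereby assigning to each coset one responsible coordinate $i$.

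For each coordinate $i$, set $\Lambda_i'=\bigcup_{j\in J_i}(\mu_{j,i}+A_i\Z^2)\subset\R^2$, a union of at most $l_i$ cosets of the full-rank lattice $A_i\Z^2$. This set is uniformly separated (a finite union of distinct cosets of a single lattice is uniformly separated) and satisfies $D^+(\Lambda_i')\le l_i/\det A_i$, which is strictly less than $1$ precisely because $l_i<\det A_i$. By the Riesz-basis part of Theorem~\ref{th:lyub}, $(\gauss_1,\Lambda_i')$ is then a Riesz basis for a proper subspace of $L^2(\R)$, hence not complete, so I can pick $f_i\in L^2(\R)$, $f_i\neq0$, with $V_{\gauss_1}f_i(\lambda)=0$ for all $\lambda\in\Lambda_i'$.

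Finally, putting $f=f_1\otimes\cdots\otimes f_d\neq0$, any $\lambda\in\Lambda$ lies in some coset $\mu_j+G$; letting $i$ be the coordinate responsible for $j$, the $i$-th block of $\lambda$ is a point of $\mu_{j,i}+A_i\Z^2\subset\Lambda_i'$, so the factor $V_{\gauss_1}f_i$ in the product vanishes there, whence $V_{\gauss_d}f(\lambda)=0$. Thus $f$ is orthogonal to $(\gauss_d,\Lambda)$ and the system is incomplete. The main point to get right is the bookkeeping in the third step: one must check that the responsible-coordinate projections really are cosets of $A_i\Z^2$ (this is where the product structure of $G$ is essential) and that the density of $\Lambda_i'$ does not exceed $l_i/\det A_i$ even when several of the $l_i$ cosets coincide. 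Once the density stays below $1$, Theorem~\ref{th:lyub} carries out the analytic work.
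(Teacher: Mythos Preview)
Your proof is correct and follows essentially the same route as the paper: partition the $n$ cosets into $d$ groups of sizes $l_i$, project each group to the $i$-th factor to obtain a set $\Lambda_i'\subset\R^2$ of density at most $l_i/\det A_i<1$, invoke Theorem~\ref{th:lyub} to find nonzero $f_i$ with $V_{\gauss_1}f_i|_{\Lambda_i'}=0$, and tensor the $f_i$ together. Your bookkeeping is in fact slightly more careful than the paper's, since you note that projected cosets may coincide and hence bound the density by $\le l_i/\det A_i$ rather than asserting equality.
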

\begin{proof}
We split the $n$ cosets of $\odot_{i=1}^d A_i\Z^2$ into $d$ groupings
$\Delta_1,\ldots ,\Delta_d$ such that $|\Delta_i|=l_i$. $\Delta_i$ contains coset representatives denoted by $[\tau]$. We have
\[\Lambda=\bigcup_{i=1}^d\bigcup_{[\tau]\in\Delta_i}\{A_1\Z^2\times\ldots\times  A_d\Z^2\}+[\tau],\]
The short-time Fourier transform of the tensor product $\otimes_{i=1}^d f_i$
factorizes, namely
\[V_{\gauss_d}(\otimes_{i=1}^d f_i)(x,\omega)=\prod_{i=1}^d
V_{\gauss_1}f_i(x_i,\omega_i),\] where $(x_i,\omega_i)\in A_i\Z^2+[\tau_i]$, $[\tau_i]$ being the coset representative of $A_i\Z^2$ in the restriction of $\odot_{i=1}^d A_i\Z^2$ to $A_i\Z^2$.
As the density of the set \[U_i=\bigcup_{[\tau]\in\Delta_i}A_i\Z^d+[\tau_i],\quad1\le i\le d\]
is $l_iD(A_i)<1$,
Theorem~\ref{th:lyub} applies and non-zero functions $f_i\in L^2(\R)$ can be
chosen so that $V_{\gauss_1}f_i(x_i,\omega_i)=0$, for all $(x_i,\omega_i)\in
U_i$. Then as in Proposition~\ref{prop:gaussian-block-diagonal} we
conclude that $V_{\gauss_d}(\otimes_{i=1}^d f_i)$ vanishes on all of
$\Lambda$, but $\otimes_{i=1}^d f_i\neq0$. Hence, this Gabor system is
incomplete in $L^2(\R^d)$.
\end{proof}

\textit{Remark}: If $\Lambda$ satisfies the hypothesis of
Theorem~\ref{th:thmg}, then the density theorem implies incompleteness if
$D(\Lambda)=n\prod_{i=1}^d \frac 1 {\det A_i} <1$, that is, if $ \prod_{i=1}^d
\det A_i  >n$. Hence, for Theorem~\ref{th:thmg} to be relevant, we need to
combine the condition $ \prod_{i=1}^d \det A_i \leq n$ with the condition
$\det A_i > l_i$ and  $\sum_{i=1}^d l_i =n$. This leads to
\begin{eqnarray}\label{eqn:ProductSumInequality}
\prod_{i=1}^d l_i < \sum_{i=1}^d l_i.
\end{eqnarray}
Assuming without loss of generality the order $l_1\geq l_2\geq \ldots \geq l_d >0$, we divide \eqref{eqn:ProductSumInequality} by $l_1$ and observe that then
$\prod_{i=2}^d l_i < d$. As all $l_i$ are positive integers, we conclude that $l_2=l_3=l_4=\ldots= l_d=1$
and $l_1=n-d+1$. 

Note that Theorem~\ref{th:thmg} implies the incompleteness asserted in
Proposition~\ref{prop:gaussian-block-diagonal} for $a,b>\frac12$ because
$(\Z\times 2a\Z)\odot (\Z\times 2b\Z)$ is a subgroup of $\Lambda$ of index
2 and $l_1=l_2=1<2a,2b$. Similarly, we can deduce the following result.
\begin{corollary}\label{prop:gaussian-block-diagonal2} Let
$\Lambda=\left(\begin{smallmatrix}
ak& a\\
0&b
\end{smallmatrix}\right)\Z^2\times\Z^2, k\in\N$. Then the Gabor system $(\gauss_2,\Lambda)$ is
incomplete if there exists $l\in\N$ such that $a>\frac
lk,b>\frac{k-l}k$.
\end{corollary}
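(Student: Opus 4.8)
The plan is to obtain this corollary as the special case $d=2$ of Theorem~\ref{th:thmg}: I will exhibit inside $\Lambda$ a product subgroup $A_1\Z^2\odot A_2\Z^2$, compute its index, and then choose the integers $l_1,l_2$ so that the hypotheses of the theorem reduce exactly to the stated conditions on $a$ and $b$.

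First I would write out the coordinates of a generic point of $\Lambda$. With time part $\left(\begin{smallmatrix}ak&a\\0&b\end{smallmatrix}\right)\left(\begin{smallmatrix}n_1\\n_2\end{smallmatrix}\right)=\left(\begin{smallmatrix}a(kn_1+n_2)\\bn_2\end{smallmatrix}\right)$ and frequency part $(m_1,m_2)\in\Z^2$, such a point has coordinates $(x_1,x_2,\omega_1,\omega_2)=(a(kn_1+n_2),\,bn_2,\,m_1,\,m_2)$. The off-diagonal entry $a$ couples $x_1$ and $x_2$ through the shared integer $n_2$, and breaking this coupling is the crux of the argument.

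Next I would produce the product subgroup. Take $A_1=\left(\begin{smallmatrix}ak&0\\0&1\end{smallmatrix}\right)$ and $A_2=\left(\begin{smallmatrix}bk&0\\0&1\end{smallmatrix}\right)$, so that $A_1\Z^2\odot A_2\Z^2=\{(akp,\,bkq,\,m_1,\,m_2):p,q,m_1,m_2\in\Z\}$. To check containment in $\Lambda$, substitute $n_2=kq$ and $n_1=p-q$: then $a(kn_1+n_2)=ak(p-q)+akq=akp$ and $bn_2=bkq$, so each generator lies in $\Lambda$, and the set is plainly a subgroup. For the index I would compare covolumes. The lattice $\Lambda$ has covolume $abk$, namely the determinant $abk$ of $\left(\begin{smallmatrix}ak&a\\0&b\end{smallmatrix}\right)$ times covolume $1$ on the frequency side, whereas the subgroup has covolume $\det A_1\cdot\det A_2=ak\cdot bk=abk^2$. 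Hence the index is $n=abk^2/(abk)=k$.

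Finally I would apply the theorem. With $n=k$, $\det A_1=ak$ and $\det A_2=bk$, set $l_1=l$ and $l_2=k-l$, so that for $l$ in the relevant range $1\le l\le k-1$ both are positive integers with $l_1+l_2=k=n$. The condition $l_1<\det A_1$ becomes $l<ak$, i.e.\ $a>l/k$, and $l_2<\det A_2$ becomes $k-l<bk$, i.e.\ $b>(k-l)/k$; these are precisely the standing hypotheses. Theorem~\ref{th:thmg} then delivers incompleteness of $(\gauss_2,\Lambda)$. The main obstacle is the containment-and-index step: one must confirm that the diagonal product lattice genuinely sits inside the sheared lattice $\Lambda$ and that its index is exactly $k$, since an incorrect index would misalign the constraint $l_1+l_2=n$ with the hypothesis on $a$ and $b$.
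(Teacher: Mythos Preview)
Your proof is correct and follows exactly the approach of the paper: you identify the same product subgroup $(ak\Z\times\Z)\odot(bk\Z\times\Z)$ of index $k$ in $\Lambda$ and then apply Theorem~\ref{th:thmg} with $l_1=l$, $l_2=k-l$. The paper's proof is just a terser version of yours, stating the subgroup and its index without the explicit containment and covolume verification that you spell out.
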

\begin{proof}
The subgroup $(ak\Z\times\Z)\odot(bk\Z\times\Z)$ has index $k$ in
$\Lambda$. If there exist $l_1,l_2$ such that $l_1<ak,l_2<bk$ and
$l_1+l_2=k$, the result  follows by Theorem~\ref{th:thmg}.
\end{proof}
\emph{Remark:} The range of parameters $k,l$, where the condition from
Corollary~\ref{prop:gaussian-block-diagonal2} is stronger than the density
condition ($abk>1$) is quite small if $k\ge5$: the only values of $l$ for which
\[\frac1k>ab>\frac lk\cdot\frac{k-l}k\] are $l=1,k-1$ because always
$2(k-2)>k$.

We can combine the results above to obtain further examples.
\begin{proposition}
Let $\Lambda=\Z^3\times\left(\begin{smallmatrix}
a& a&0\\
-b&b&0\\
0&0&c
\end{smallmatrix}\right)\Z^3$. Then the Gabor system $(\gauss_3,\Lambda)$ is a
frame for $L^2(\R^3)$ if $a,b<\frac12,c<1$. If $a=b\le\frac12,c=1$,
$(\gauss_3,\Lambda)$ is complete, but not a frame for $L^2(\R^3)$. If
$a,b>\frac12$ or $c>1$, $(\gauss_3,\Lambda)$ is incomplete.
\end{proposition}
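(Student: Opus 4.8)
The plan is to recognize that $\Lambda$ is a coordinate-shuffled tensor product of the four-dimensional lattice treated in Proposition~\ref{prop:gaussian-block-diagonal} and a two-dimensional lattice of the type covered by Theorem~\ref{th:lyub}. Writing $\Lambda_1=\Z^2\times\left(\begin{smallmatrix}a&a\\-b&b\end{smallmatrix}\right)\Z^2\subset\R^4$ and $\Lambda_2=\Z\times c\Z\subset\R^2$, one checks directly from the block-diagonal form of the frequency matrix that $\Lambda=\Lambda_1\odot\Lambda_2$, while of course $\gauss_3=\gauss_2\otimes\gauss_1$. I would begin by recording the factorization of the short-time Fourier transform,
\[ V_{\gauss_3}(f_1\otimes f_2)(\lambda)=V_{\gauss_2}f_1(\lambda_1)\cdot V_{\gauss_1}f_2(\lambda_2), \]
valid for $\lambda\in\Lambda$ with $\lambda_1\in\Lambda_1$, $\lambda_2\in\Lambda_2$ the corresponding components, exactly as in the proofs above; this identity is the engine for every case.

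For the frame case $a,b<\tfrac12$, $c<1$, Proposition~\ref{prop:gaussian-block-diagonal} gives that $(\gauss_2,\Lambda_1)$ is a frame for $L^2(\R^2)$, and since $\Z\times c\Z$ is uniformly separated with $D(\Z\times c\Z)=1/c>1$, Theorem~\ref{th:lyub} gives that $(\gauss_1,\Z\times c\Z)$ is a frame for $L^2(\R)$; Lemma~\ref{lemma:tensor-gabor-frames} then yields that $(\gauss_3,\Lambda)$ is a frame for $L^2(\R^3)$. For the incomplete case I treat the two alternatives separately. If $a,b>\tfrac12$, Proposition~\ref{prop:gaussian-block-diagonal} supplies $f_1\neq0$ with $V_{\gauss_2}f_1$ vanishing on $\Lambda_1$; if instead $c>1$, then $D(\Z\times c\Z)=1/c<1$ and the density theorem (Theorem~\ref{thm:density}) forces $(\gauss_1,\Z\times c\Z)$ to be incomplete, providing $f_2\neq0$ with $V_{\gauss_1}f_2$ vanishing on $\Z\times c\Z$. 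In either case, tensoring the vanishing factor with an arbitrary nonzero function in the complementary variable and invoking the STFT factorization produces a nonzero $F$ with $V_{\gauss_3}F\equiv0$ on $\Lambda$, so $(\gauss_3,\Lambda)$ is incomplete.

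The case $a=b\le\tfrac12$, $c=1$ is where the work lies. Completeness is the clean part: in both subcases $a=b<\tfrac12$ (where $(\gauss_2,\Lambda_1)$ is a frame) and $a=b=\tfrac12$ (where it is complete by Proposition~\ref{prop:gaussian-block-diagonal}) the first factor is complete, and $(\gauss_1,\Z\times\Z)$ is complete, so, since the closed span of a tensor Gabor system is all of $L^2(\R^3)$ precisely when both component systems are complete, $(\gauss_3,\Lambda)$ is complete. The delicate point is ruling out a lower frame bound. Here I would fix any $f_1\neq0$, so that $S_1:=\sum_{\lambda_1\in\Lambda_1}|V_{\gauss_2}f_1(\lambda_1)|^2$ is positive (by completeness) and finite (the lattice Gabor system with Gaussian window is Bessel), and then exploit that $(\gauss_1,\Z\times\Z)$ has no lower frame bound, as already used in Proposition~\ref{prop:gaussian-block-diagonal}, to select unit-norm $f_2^{(n)}$ with $\sum_{\lambda_2}|V_{\gauss_1}f_2^{(n)}(\lambda_2)|^2\to0$. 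Setting $F_n=f_1\otimes f_2^{(n)}$, the factorization gives total analysis energy $S_1\cdot\sum_{\lambda_2}|V_{\gauss_1}f_2^{(n)}(\lambda_2)|^2\to0$ while $\|F_n\|^2=\|f_1\|^2$ stays constant, so no lower bound can exist. The main obstacle I anticipate is simply being careful that the relevant upper (Bessel) bounds are finite, so that $S_1<\infty$ and the two energies multiply cleanly; this is routine for Gaussian windows on lattices but should be stated explicitly, mirroring the use of the bounded analysis operator $D$ in the proof of Proposition~\ref{prop:gaussian-block-diagonal}.
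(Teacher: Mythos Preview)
Your proof is correct and follows the same tensor-factorization strategy as the paper, invoking Proposition~\ref{prop:gaussian-block-diagonal}, Theorem~\ref{th:lyub}, and Lemma~\ref{lemma:tensor-gabor-frames} in the same places (the paper writes a triple tensor $f_1\otimes f_2\otimes f_3$ rather than your $2{+}1$ split $\Lambda_1\odot\Lambda_2$, but this is cosmetic). Your treatment is in fact more complete than the paper's: the paper's proof only addresses the frame case and the two incompleteness cases and says nothing about the middle case $a=b\le\tfrac12$, $c=1$, whereas your argument there---completeness via the tensor of complete systems, and failure of a lower frame bound by combining the finite Bessel sum $S_1$ for $(\gauss_2,\Lambda_1)$ with the collapsing lower bound for $(\gauss_1,\Z\times\Z)$---is correct and fills that gap cleanly.
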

\begin{proof}
We choose $F=f_1\otimes f_2\otimes f_3\in L^2(\R^3)$ in order to apply a tensor argument as~\eqref{eq:eqn-factoriz}.
When $a,b>\frac 12$, the claim follows immediately from Proposition~\ref{prop:gaussian-block-diagonal}. 
When $c>1$, it suffices to choose $f_3$ which is in
the orthogonal complement of $\{T_m M_{cn}\gauss_1:m,n\in\Z\}$ and repeat
the same line of reasoning.

Whenever $a,b<\frac12,c<1$, then $(\gauss_3,\Lambda)$ is a frame, because
it is the product of two frames (see Lemma~\ref{lemma:tensor-gabor-frames} and
Proposition~\ref{prop:gaussian-block-diagonal}).
\end{proof}
\textit{Remark}: The herein presented results extend to some (but not all)
lattices which are symplectically identical to those listed. For a symplectic
transformation $M$ with associated metaplectic operator $\mu(M)$, the
spanning properties of $(\mu(M)g,M\Lambda)$ are equivalent to those of
$(g,\Lambda)$~\cite{Fol89,G01}. Furthermore, the metaplectic operators
$\mu(M)$ associated to symplectic matrices $M$ of the form
$\left(\begin{smallmatrix}
B &0\\
0&(B^\ast)^{-1} \end{smallmatrix}\right) $,  $B$ unitary, respectively
$\left(\begin{smallmatrix}
  0&Id\\
-Id&0
 \end{smallmatrix}\right)
$, are dilation by the unitary matrix $B$, respectively the Fourier
transform~\cite{Fol89,G01}. Both leave the Gaussian invariant, hence, for
such $M$,  $(\gauss_d,M\Lambda)$ is a frame if and only if
$(\gauss_d,\Lambda)$ is.

The results presented only scratch the surface of the theory of multivariate
Gaussian Gabor frames. But we hope that together with~\cite{GroGa}, they
will motivate further study on the subject.

%

\bibliographystyle{elsart-num}
\bibliography{bibliogr}

\end{document}